\crefname{lem}{le lemme}{les lemmes}
\newtheorem{thm}{Theorem}[section]
\newtheorem{prop}[thm]{Proposition}
\newtheorem{lem}[thm]{Lemma}
\newtheorem{cor}[thm]{Corollary}
\theoremstyle{definition}
\newtheorem{definition}[thm]{Definition}
\newtheorem{remark}[thm]{Remark}
\newtheorem{example}[thm]{Example}
\numberwithin{equation}{section}
\newcommand{\M}{{\rm{M}}}
\newcommand{\F}{{\rm{F}}}
\begin{document}

\title{Eckardt points on a cubic threefold}
\author{Gloire Grâce Bockondas}
\address{Gloire Grâce Bockondas, Département de Mathématiques, Université Marien Ngouabi, Brazzaville, Congo}
\email{gloire.bockondas@umng.cg}
\urladdr{https://sites.google.com/view/gloiregbockondas/} 
\author{Basile Guy Richard Bossoto}
\address{Basile Guy Richard Bossoto, Département de Mathématiques, Université Marien Ngouabi, Brazzaville, Congo}
\email{basile.bossoto@umng.cg}

\begin{abstract}
In this paper we survey Eckardt points on a smooth complex cubic threefold with an approach aimed at computing all Eckardt points of a cubic threefold. In addition, we construct cubic threefolds with no Eckardt points but containing triple lines.
\end{abstract}
\keywords{cubic threefold, Eckardt point, triple line, main component.}
\subjclass{14J10; 14J29; 14J30; 14H20.}
\maketitle

\section{Introduction} 
Eckardt points originate from a paper of F.E. Eckardt \cite{eckardt1876ueber}. They have been thoroughly studied in the case of cubic surfaces in $\mathbb{P}^{3}$, defined as points corresponding to the intersection of three of the 27 lines \cite{segre1943non}. They have then been generalized to higher-dimensional and higher-degree hypersurfaces \cite{cools2010star}. They are also called star points or inflection points \cite{tjurin1971geometry}. On a smooth complex cubic threefold $X\subset\mathbb{P}^{4}$, an Eckardt point $p\in X$ is a point for which the intersection $X\cap T_{p}X$ of the projective tangent space of $X$ at $p$ with $X$ has multiplicity three at $p$. This is equivalent to saying that the intersection $X\cap T_{p}X \subset T_{p}X$ is a cone with vertex $p$ over an elliptic curve $E_{p}$ \cite{cools2010star}. Each Eckardt point $p\in X$ parametrizes thus an elliptic curve $E_{p}$ on the Fano surface of lines $\F(X)$ of $X$, which is the base of the cone $X\cap T_{p}X$ (see \cite{tjurin1971geometry}). A cubic threefold can contain at most finitely many Eckardt points, and in fact at most 30, which is achieved by the Fermat cubic whereas the general one has none \cite{clemens1972intermediate, canonero1997inflection}. There are then at most 30 elliptic curves on the Fano surface $\F(X)$ while for the general cubic threefold there are none \cite{roulleau2009elliptic}, the Fano surface of the Fermat cubic threefold being the only one that contains exactly 30 elliptic curves. This is the most common characterization of Eckardt points on a cubic threefold in the literature.

Furthermore, Eckardt points on a smooth cubic hypersurface $Y\subset\mathbb{P}^{n}$ can be studied through polar quadrics. They have been intensively studied in this way in \cite{canonero1997inflection} where the authors found the maximal number of Eckardt points of a cubic hypersurface in $\mathbb{P}^{n}$. In \cite{cools2010star}, a connection between Eckardt points of a hypersurface of degree $d$ in $\mathbb{P}^{n}$ and polar hypersurfaces is used to determine all Eckardt points on the Fermat hypersurface of degree $d$ in $\mathbb{P}^{n}$.

Nevertheless, both equivalent characterizations and a method for finding all Eckardt points, certainly well-known to the experts, are difficult to find in the literature. This paper aims to fill this gap by studying Eckardt points on a cubic threefold using these two characterizations with an approach focusing on finding all Eckardt points of a cubic threefold. Moreover, we construct cubic threefolds with no Eckardt points but containing triple lines, which is as far as we know new. We also study through many examples the configuration of elliptic curves, triple lines, and the residual component of the union of elliptic curves in the curve $\M(X)$ of lines of the second type of $X$ called the main component. These computations show how elliptic curves, triple lines and the main component can be related in a cubic threefold.

\textbf{Acknowledgements}.
We wish to thank warmly Samuel Boissière for many useful discussions. Besides, we would like to thank Søren Gammelgaard and Yilong Zhang for interesting discussions. The first author has been supported by the Program EMS SIMONS for Africa and the ``Laboratoire de Mathématiques et Applications de l'Université de Poitiers UMR CNRS 7348".

\section{Notations and Preliminaries}
For $X\subset\mathbb{P}^{4}$ a smooth complex cubic threefold, the Fano surface $\F(X)$ is a smooth general type surface that parametrizes the lines on $X$ (see \cite{clemens1972intermediate}). Lines on $X$ are either of the first type or of the second type \cite{clemens1972intermediate} depending on the decomposition of their normal bundles. A line $\ell\subset X$ is said to be of the second type if and only if there exists a unique 2-plane $P\supset\ell$ tangent to $X$ in all points of $\ell$. We write $P\cap X=2\ell\cup\ell^{'}$, where $\ell^{'}$ is the residual line of $\ell$. Otherwise we say that $\ell$ is a line of the first type. For $\ell\neq\ell^{'}$ the line $\ell$ is called a double line, and if $\ell=\ell^{'}$ we say that $\ell$ is a triple line. The locus $\M(X)$ of lines of the second type on $X$ is a curve whose the singularities are exactly the points corresponding to triple lines on $X$ (see  \cite{bockondas2023triple}). However, this curve is smooth for a generic cubic threefold $X\subset\mathbb{P}^{4}$  \cite{huybrechts2020geometry}. 

Denote by $p_{i,j}$, $0\leq i<j\leq 4$, the Plücker coordinates of the grassmannian of lines $\mathbb{G}(1,4)\subset\mathbb{P}^{9}$. On the affine chart $p_{0,1}=1$ of $\mathbb{G}(1,4)$ with local coordinates $(p_{0,2},p_{0,3},p_{0,4},p_{1,2},p_{1,3},p_{1,4})$ we have the decomposition $f(p)=\displaystyle\sum_{i+j = 3}t_{0}^{i}t_{1}^{j}\phi^{i,j}(\ell)$ for any point $p\in\ell\subset X$ with coordinates $t_{0}v_{0}+t_{1}v_{1}$, where $\phi^{i,j}(\ell)$ are functions of the local Plücker coordinates of $\ell$ and $f=0$ the equation of $X$. The Fano surface $\F(X)$ is then defined by the vanishing locus of the terms $\phi^{i,j}(\ell)$. On the other hand, any 2-plane $P$ that contains $\ell$ meets the plane $\pi=\lbrace x_{0}=0, x_{1}=0\rbrace$ at a unique point $v_{2}=(0:0:\alpha_{2}:\alpha_{3}:\alpha_{4})$ such that $\ell$ and $v_{2}$ span $\pi$. The plane cubic $P\cap X$ is then defined by $f(t_{0}v_{0} + t_{1}v_{1} + t_{2}v_{2})=0$ where $(v_{0}: v_{1}: v_{2})$ are the projective
coordinates of $P$. Expanding in $t_{2}$ we write:

\begin{equation*}
0=f(t_{0}v_{0}+t_{1}v_{1}) + t_{2} \sum_{i=2}^{4}\dfrac{\partial f}{\partial x_{i}}(t_{0}v_{0}+t_{1}v_{1})\alpha_{i}+\dfrac{1}{2}t_{2}^{2}\sum_{2\leq i,j\leq 4}\dfrac{\partial^{2} f}{\partial x_{j}\partial x_{i} }(t_{0}v_{0}+t_{1}v_{1})\alpha_{i}\alpha_{j}+ t_{2}^{3}f(v_{2}).
\end{equation*}
The line $\ell\subset P$ of equation $t_{2}=0$ is a line of the second type on $X$ if and only if $f(t_{0}v_{0}+t_{1}v_{1})=0$ and the plane cubic equation is a multiple of $t_{2}^{2}$. Furthermore, the second type line $\ell\subset X$ of equation $t_{2}=0$ is a triple line if and only if the plane cubic equation is a multiple of $t_{2}^{3}$ (see \cite{bockondas2023triple}).

\section{Characterizations of Eckardt points on a cubic threefold}

\subsection{Eckardt points and elliptic curves}
We recall the definition of an Eckardt point on a smooth complex cubic threefold $X\subset\mathbb{P}^{4}$ (see \cite[Definition 1.5]{laza2018moduli} and \cite[Proposition 6.3.5]{gammelgaard2018cubic}). Denote by $T_{p}X$ the projective tangent space of $X$ at $p\in X$.

\begin{definition}\label{ECKARDT}
A point $p\in X$ is an Eckardt point if it is a point of multiplicity three for the cubic $X\cap T_{p}X\subset T_{p}X$.
\end{definition}
Choose coordinates $(x_{0}:\ldots: x_{4})\in\mathbb{P}^{4}$ such that $p = (1 : 0 : 0 : 0 : 0)$ and $T_{p}X=\lbrace x_{1}=0\rbrace$. The equation of $X$ may be written 

\begin{equation*}\label{one}
f(x_{0},\ldots,x_{4}) = x_{0}^{2}x_{1} + x_{0}Q(x_{1},\ldots,x_{4}) + C(x_{1},\ldots,x_{4})
\end{equation*}
where $ Q(x_{1},\ldots,x_{4})$ and $C(x_{1},\ldots,x_{4})$ are homogeneous polynomials of degree two and three respectively. So if $p\in X$ is an Eckardt point then $ Q(x_{1},\ldots,x_{4})=0$ and the equation of $X$ may take the form

\begin{equation}\label{one}
f(x_{0},\ldots,x_{4}) = x_{0}^{2}x_{1} + C(x_{1},\ldots,x_{4}).
\end{equation}
Following \cite[p.169-170]{murre1972algebraic} (see also \cite[Proposition 6.3.5]{gammelgaard2018cubic}) we have the following proposition.
\begin{prop}
A point $p\in X$ is an Eckardt point if and only if it is contained in infinitely many lines on $X$.
\end{prop}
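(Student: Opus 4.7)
The strategy I would use relies on the normal form $f = x_{0}^{2}x_{1} + x_{0}Q(x_{1},\ldots,x_{4}) + C(x_{1},\ldots,x_{4})$ (with the Eckardt case corresponding to $Q\equiv 0$) and the observation that any line $\ell \subset X$ through $p = (1:0:0:0:0)$ must lie in $T_{p}X = \{x_{1}=0\}$. I would parametrize such a candidate line as $(s : tv_{1} : tv_{2}: tv_{3}: tv_{4})$ and expand $f$ along it as $s^{2}tv_{1} + st^{2}Q(v_{1},\ldots,v_{4}) + t^{3}C(v_{1},\ldots,v_{4})$; vanishing identically in $s,t$ amounts to the three conditions $v_{1}=0$, $\tilde{Q}(v_{2},v_{3},v_{4}) := Q(0,v_{2},v_{3},v_{4}) = 0$, and $\tilde{C}(v_{2},v_{3},v_{4}) := C(0,v_{2},v_{3},v_{4}) = 0$. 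Thus lines through $p$ on $X$ correspond bijectively to the common zero locus $V(\tilde{Q},\tilde{C}) \subset \mathbb{P}^{2}$ with coordinates $(v_{2}:v_{3}:v_{4})$.

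The forward direction is then immediate: the Eckardt condition $Q\equiv 0$ forces $\tilde{Q} \equiv 0$, so lines through $p$ are parametrized by the plane cubic $\tilde{C} = 0$, which has infinitely many points. For the converse I would assume $V(\tilde{Q},\tilde{C})$ is infinite and argue that either $\tilde{Q} \equiv 0$ (whence $p$ is Eckardt and we are done) or $\tilde{Q}$ is a nonzero conic sharing with $\tilde{C}$ a common irreducible factor $P$ of degree $1$ or $2$ (by B\'{e}zout, since otherwise the intersection would be zero-dimensional). The plan is to rule out the latter alternative by exhibiting a $2$-plane contained in $X$, contradicting the classical fact that a smooth cubic threefold in $\mathbb{P}^{4}$ admits no linear subspace of dimension $\geq 2$.

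To produce the contradiction, I would split on $\deg P$. When $P$ is linear, the cubic $g = x_{0}\tilde{Q} + \tilde{C}$ cutting out $X \cap T_{p}X$ in $T_{p}X$ is divisible by $P$, so the plane $\{x_{1}=0,\, P=0\} \subset \mathbb{P}^{4}$ is a component of $X \cap T_{p}X$ and in particular lies in $X$. When $P = \tilde{Q}$ is an irreducible conic, the divisibility $\tilde{Q}\mid\tilde{C}$ gives $\tilde{C} = L\tilde{Q}$ for some linear form $L$, hence $g = (x_{0}+L)\tilde{Q}$, and the residual plane $\{x_{1}=0,\, x_{0}+L=0\}$ lies in $X$. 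Either way the forbidden $2$-plane appears, closing the converse. The main obstacle is precisely this converse: the essential geometric input is the non-existence of $2$-planes on a smooth cubic threefold, without which the intermediate cases $1 \leq \deg P \leq 2$ would leave open accidental $1$-parameter families of lines through non-Eckardt points.
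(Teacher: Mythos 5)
Your proposal is correct and follows essentially the same route as the paper: parametrize lines through $p$ by points of $\{x_0=x_1=0\}\cong\mathbb{P}^2$, reduce to the conic--cubic intersection $V(\tilde{Q},\tilde{C})$, and rule out a common factor in the converse by exhibiting a $2$-plane on $X$, which smoothness forbids. The only (cosmetic) difference is that in the quadratic-factor case you produce the residual plane $\{x_1=0,\ x_0+L=0\}$ directly, whereas the paper passes through the intermediate observation that $X$ would contain a quadric cone and hence a plane.
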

\begin{proof}
Consider a line $\ell$ going through $p$. It cuts out the hyperplane $x_{0}=0$ in a unique point $q\in\mathbb{P}^{4}$;
every point on $\ell$ has coordinates $\lambda p + \mu q$ with $(\lambda:\mu)\in\mathbb{P}^{1}$. The line $\ell$, defined by $x_{0}=\lambda, x_{i}=\mu q_{i}$ with $i=1,\ldots,4$, lies on $X$ if and only if $f(\lambda p + \mu q)=\lambda^{2}\mu q_{1}+\lambda\mu^{2}Q(q_{1},\ldots,q_{4})+\mu^{3}C(q_{1},\ldots,q_{4})=0$ for all $(\lambda:\mu)\in\mathbb{P}^{1}$, that is if and only if
$$q_{1}=0,\quad Q(q_{1},\ldots,q_{4})=0,\quad C(q_{1},\ldots,q_{4})=0.$$
The lines $\ell\subset X$ through $p$ correspond thus to the points $(x_{2}:x_{3}:x_{4})\in\mathbb{P}^{2}$  satisfying the equations $Q(0,x_{2},x_{3},x_{4})=0$ and $C(0,x_{2},x_{3},x_{4})=0$, that is the intersection points of a conic and a cubic in the plane of equation $\lbrace x_{0}=0,x_{1}=0\rbrace$. If $p$ is an Eckardt point then $Q(0,x_{2},x_{3},x_{4})=0$. The intersection $X\cap T_{p}X$ is a cone with vertex $p$ over the elliptic curve $E_{p}$ of equation $\left\lbrace x_{0}=0, C(0,x_{2},x_{3},x_{4})=0\right\rbrace$; the point $p$ is then contained in infinitely many lines on $X$. Conversely, if $Q(0,x_{2},x_{3},x_{4})$ and $C(0,x_{2},x_{3},x_{4})$ have a common factor there are infinitely many lines through $p$ contained in $X$, otherwise there are six lines in $X$ going through $p$. Moreover, if this common factor is linear then $X$ contains a plane and, if it is quadratic $X$ contains a quadratic cone, and hence a plane; this is impossible because of the smoothness of $X$. Therefore  $Q(0,x_{2},x_{3},x_{4})=0$ and $p$ is an Eckardt point.
\end{proof}
Every Eckardt point $p\in X$ parameterizes an elliptic curve $E_{p}\subset\F(X)$ of equation $\left\lbrace x_{0}=0, C(0,x_{2},x_{3},x_{4})=0\right\rbrace$, the base of the cone $X\cap T_{p}X$, and inversely every elliptic curve gives rise to an Eckardt point \cite{tjurin1971geometry, roulleau2009elliptic}. 
Moreover, there are at most finitely many Eckardt points on a smooth cubic threefold whereas a general one has no Eckardt points \cite[Lemma 2.7]{zhang2023extension}. The following result has been proven in \cite{gammelgaard2018cubic}.
\begin{prop}\cite[p.315]{clemens1972intermediate}\label{30}
A cubic threefold can contain at most 30 Eckardt points.
\end{prop}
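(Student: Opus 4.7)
The plan is to leverage the correspondence between Eckardt points of $X$ and elliptic curves on the Fano surface $\F(X)$, identifying the Eckardt locus with a set of distinguished elliptic $(-3)$-curves on a minimal surface of general type, and then to bound this number by intersection theory on $\F(X)$.

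First I would collect three key intersection numbers on $\F(X)$: the classical value $K_{\F(X)}^{2} = 45$, the self-intersection $E_p^{2} = -3$ for each elliptic curve $E_p$ arising from an Eckardt point (Roulleau's normal bundle computation in \cite{roulleau2009elliptic}), and $K_{\F(X)} \cdot E_p = 3$, forced by adjunction since $E_p$ has arithmetic genus one, giving $2g - 2 = 0 = E_p^{2} + K_{\F(X)} \cdot E_p$. Next, for distinct Eckardt points $p \ne q$, a line $\ell \in E_p \cap E_q$ must pass through both $p$ and $q$, hence $\ell = \langle p, q \rangle$; consequently $E_p \cap E_q$ is either empty or the single point $\langle p, q \rangle \in \F(X)$, and (at least generically) $E_p \cdot E_q \leq 1$, with the exceptional case $\langle p, q \rangle \subset X$ contributing exactly $1$.

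Setting $D := \sum_{i=1}^{n} E_{p_i}$ as $p_1, \ldots, p_n$ range over the Eckardt points of $X$, one computes $D \cdot K_{\F(X)} = 3n$ and $D^{2} = -3n + 2m$ with $m := \sum_{i<j} E_{p_i} \cdot E_{p_j} \geq 0$. The Hodge index theorem applied to the ample class $K_{\F(X)}$ yields $(D \cdot K_{\F(X)})^{2} \geq D^{2} \cdot K_{\F(X)}^{2}$, which rearranges to $9n^{2} + 135\,n \geq 90\,m$.

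The main obstacle is that this inequality only bounds $m$ from above and is satisfied for arbitrarily large $n$ provided $m$ is small, so intersection theory on $\F(X)$ alone does not close the argument. To extract the sharp bound $n \le 30$ one must bring in extra input, most plausibly the Albanese embedding $\F(X) \hookrightarrow J(X)$ into the $5$-dimensional intermediate Jacobian: each $E_p$ becomes a translate of an elliptic abelian subvariety of $J(X)$, and the desired bound follows from controlling how many distinct such elliptic subvarieties can have translates lying in the image of $\F(X)$. The bound $30$ is attained by the Fermat cubic, whose intermediate Jacobian is isogenous to $E^{5}$ for a CM elliptic curve $E$ and whose $30 = \binom{5}{2} \cdot 3$ Eckardt points correspond to pairs of coordinate directions together with a choice of cube root of unity; this suggests that the extremal case, and hence the sharp numerical bound, is ultimately governed by the combinatorics of elliptic subvarieties of a principally polarized abelian fivefold rather than by the Hodge index inequality alone.
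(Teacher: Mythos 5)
There is a genuine gap, and you have in fact identified it yourself: the Hodge index inequality $(D\cdot K_{\F(X)})^{2}\geq D^{2}K_{\F(X)}^{2}$ applied to $D=\sum E_{p_i}$ gives $9n^{2}+135n\geq 90m$, which constrains the number of pairwise intersections $m$ but never bounds $n$. The appeal to the Albanese embedding and to elliptic subvarieties of the intermediate Jacobian is only a suggestion of where a bound might come from, not an argument; as written, no step of the proposal yields $n\leq 30$, so the proof is incomplete.

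The missing idea is not more abelian-variety theory but a simple positivity argument against a divisor of \emph{known class}. Every line through an Eckardt point is of the second type, so each elliptic curve $E_{p_i}$ is an irreducible component of the curve $\M(X)\subset\F(X)$ of lines of the second type, and by Clemens--Griffiths $\M(X)\equiv 2K_{\F}$ while $K_{\F}\equiv 3C_{\ell}$, where $C_{\ell}$ is the incidence divisor of a line, with $C_{\ell}^{2}=5$. From $K_{\F}\cdot E_{p}=3$ (which you correctly derived via adjunction) one gets $C_{\ell}\cdot E_{p}=1$, i.e.\ each Eckardt point contributes exactly $1$ to $C_{\ell}\cdot\M(X)$; since $C_{\ell}$ meets every remaining component of the effective divisor $\M(X)$ non-negatively, the number of Eckardt points is at most
$$C_{\ell}\cdot\M(X)=2\,C_{\ell}\cdot K_{\F}=6\,C_{\ell}^{2}=30.$$
So the decisive input is the identification of the $E_{p}$ as components of $\M(X)\equiv 2K_{\F}$, which turns the problem into evaluating one fixed intersection number rather than optimizing over configurations of $(-3)$-elliptic curves. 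Your computation of $K_{\F(X)}\cdot E_{p}=3$ is the one ingredient of your setup that survives into the correct proof.
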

\begin{proof}
We reproduce the proof and for completeness see \cite[Proposition 6.3.8]{gammelgaard2018cubic} and \cite[Lemma 4.8.4]{zhang2022topological}. Consider $P\subset\mathbb{P}^{4}$ a plane and $K_{\F}:=\lbrace [\ell]\in\F(X)\vert \ell\cap P\neq 0\rbrace$ the canonical divisor of $\F(X)$ (see \cite{clemens1972intermediate}). Let $C_{\ell}$ denote the curve of lines on $X$ incident to~$\ell$. We have $K_{\F}\cdot E_{p}=3$, then $C_{\ell}\cdot E_{p}=1$ since $K_{\F}=3C_{\ell}$ \cite[(10.9)]{clemens1972intermediate}. On the other hand, any component of $\M(X)$ intersects $K_{\F}$ non-negatively since $K_{\F}$ is effective. Moreover, all elliptic curves $E_{p_{i}}$ are parametrised by Eckardt points $p_{i}$ and are contained in $\M(X)$. There are thus at most $C_{\ell}\cdot \M(X)$ Eckardt points in $X$, with $C_{\ell}\cdot \M(X)=2C_{\ell}\cdot K_{\F}=6C_{\ell}^{2}=30$ since $\M(X)=
2K_{\F}$ and $C_{\ell}^{2}=5$
by \cite[Proposition 10.21, (10.8)]{clemens1972intermediate}.
\end{proof}
The Fano surface $\F(X)$  contains therefore at most 30 elliptic curves. Note that the curve $\M(X)$ of lines of the second type may contain components other than elliptic curves. However, if it contains exactly 30 elliptic curves then it has no components besides the elliptic components. Only one cubic hypersurface of $\mathbb{P}^{4}$ has 30 Eckardt points: the Fermat cubic $F_{4}$. Its Fano surface $\F(F_{4})$ is the only Fano surface that contains 30 elliptic curves \cite{ roulleau2009elliptic}.

\begin{definition}
The residual component of the union of elliptic curves in the curve of lines of the second type is called the main component.
\end{definition}
Apart from the Fermat cubic, for every smooth cubic threefold $X\subset\mathbb{P}^{4}$ containing Eckardt points the main component is not empty.

\begin{prop}\cite[Lemma 1.18]{murre1972algebraic}
If $\ell\subset X$ is a line of the first type and $p\in\ell$ then there are six lines on $X$ through $p$.
\end{prop}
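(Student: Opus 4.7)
The plan is to reduce the statement to the characterization of Eckardt points given in the previous proposition. That result shows that, in suitable coordinates centered at $p$, the lines on $X$ through $p$ correspond to the intersection points of the conic $\lbrace Q(0,x_{2},x_{3},x_{4})=0\rbrace$ and the cubic $\lbrace C(0,x_{2},x_{3},x_{4})=0\rbrace$ in the plane $\lbrace x_{0}=0,x_{1}=0\rbrace$; by smoothness of $X$ these share no common component, so whenever $p$ is not an Eckardt point, Bézout's theorem gives exactly six intersection points with multiplicity, hence six lines in $X$ through $p$. The whole proof therefore reduces to showing that no point of a line $\ell$ of the first type can be an Eckardt point.

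Suppose by contradiction that $p\in\ell$ is an Eckardt point. Then $X\cap T_{p}X$ is the cone $C_{p}$ with vertex $p$ over the elliptic curve $E_{p}$, and every line on $X$ through $p$ is a ruling of $C_{p}$; in particular $\ell$ is such a ruling, corresponding to some point $e_{0}\in E_{p}$. At any $q\in\ell\setminus\lbrace p\rbrace$, the tangent 2-plane $T_{q}C_{p}$ is spanned by the ruling direction (i.e. $\ell$ itself) together with the tangent line $T_{e_{0}}E_{p}$, and in particular it depends only on $\ell$, not on the choice of $q$. Since $C_{p}\subset X$, this common 2-plane $P$ satisfies $P=T_{q}C_{p}\subset T_{q}X$ for every $q\in\ell\setminus\lbrace p\rbrace$, while $P\subset T_{p}X$ holds automatically. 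Hence $P$ is a 2-plane through $\ell$ tangent to $X$ along all of $\ell$, so $\ell$ is a line of the second type, contradicting the hypothesis.

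The main obstacle is the geometric fact that $T_{q}C_{p}$ is constant along a ruling of the cone $C_{p}$: this rests on the smoothness of $E_{p}$ at $e_{0}$ (so that the tangent direction $T_{e_{0}}E_{p}$ is well defined) and on the standard description of the tangent plane of a cone at a smooth point as the span of the ruling and the tangent line to the base at its foot. Granted this, the contradiction shows that $p$ is not an Eckardt point, and the six lines through $p$ then follow directly from the previous proposition.
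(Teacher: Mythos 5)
The paper does not actually prove this proposition; it is quoted from Murre with only a citation, so there is no in-paper argument to compare against. Judged on its own, your strategy is sound and self-contained: you reduce the count to the Bézout computation already set up in the preceding proposition (lines through $p$ correspond to the intersection of the conic $\lbrace Q(0,x_{2},x_{3},x_{4})=0\rbrace$ with the cubic $\lbrace C(0,x_{2},x_{3},x_{4})=0\rbrace$, which have no common component by smoothness of $X$), and you supply the missing ingredient, namely that a point of a first-type line cannot be an Eckardt point. Your argument for that ingredient --- the tangent plane of the cone $X\cap T_{p}X$ is constant along a ruling, hence gives a $2$-plane tangent to $X$ along all of $\ell$, forcing $\ell$ to be of the second type --- is correct (granting, as the paper does, that $E_{p}$ is smooth), and it is in fact a proof of the very next proposition in the paper (``a line through an Eckardt point is of the second type''), which is likewise only cited there; making that dependency explicit would be cleaner than folding it into this proof.

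The one point you should tighten is the passage from ``six intersection points counted with multiplicity'' to ``six lines''. The six points of the conic--cubic intersection need not be distinct: one can check that a line $m$ through $p$ is of the second type exactly when the conic and cubic fail to meet transversally at the corresponding point, and since $C_{\ell}\cdot\M(X)=30>0$ every first-type line $\ell$ does meet some second-type line, so at such a point $p\in\ell$ there are strictly fewer than six distinct lines. Hence the statement can only be true with the multiplicity convention, which is consistent with how the preceding proof in the paper uses the phrase ``six lines'', but your write-up should say explicitly that the count is with multiplicity (or restrict to points of $\ell$ lying on no second-type line if distinctness is intended). With that caveat recorded, the proof is complete.
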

Every point $p\in X$ not contained in a line of the second type is thus contained in six lines and we have the following proposition.

\begin{prop}\cite{murre1972algebraic}
A line going through an Eckardt point is of the second type.
\end{prop}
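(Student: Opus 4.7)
The plan is to use the normal form $f = x_0^2 x_1 + C(x_1, x_2, x_3, x_4)$ associated to the Eckardt point $p = (1:0:0:0:0)$, with $T_p X = \{x_1 = 0\}$. By the previous proposition, any line $\ell \subset X$ through $p$ is the line joining $p$ to a point $(0:a_2:a_3:a_4) \in E_p = \{x_0 = x_1 = 0,\, C(0, x_2, x_3, x_4) = 0\}$, so $\ell$ is a ruling of the cone $X \cap T_p X \subset T_p X$ with vertex $p$. The goal is then to exhibit a unique 2-plane $P \supset \ell$ tangent to $X$ at every point of $\ell$.

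Geometrically, the natural candidate for $P$ is the tangent plane to the 2-dimensional cone $X \cap T_p X$ along the ruling $\ell$: because the cone is swept by lines through $p$, this tangent plane contains $\ell$ and is independent of the point of $\ell$ at which it is taken. At a smooth point $q \in \ell$ of the cone, the identification $T_q(X \cap T_p X) = T_q X \cap T_p X$ forces $P \subset T_q X$; tangency to $X$ at the vertex $p$ is automatic since $P \subset T_p X$.

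To confirm this and verify uniqueness by direct computation, I would parametrize $\ell$ as $q = (\lambda, 0, \mu a_2, \mu a_3, \mu a_4)$ and compute $T_q X$ from the partial derivatives of $f$. Setting $C_i := (\partial C/\partial x_i)(0, a_2, a_3, a_4)$, this yields
\[
T_q X = \{(\lambda^2 + \mu^2 C_1) x_1 + \mu^2 (C_2 x_2 + C_3 x_3 + C_4 x_4) = 0\}.
\]
A vector $w \in \mathbb{C}^5$ lies in $T_q X$ for every $(\lambda : \mu)$ if and only if the coefficients of $\lambda^2$ and $\mu^2$ vanish separately, giving $w_1 = 0$ and $C_2 w_2 + C_3 w_3 + C_4 w_4 = 0$. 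This cuts out a subspace of $\mathbb{C}^5$ containing $p$ trivially and containing $v := (0, 0, a_2, a_3, a_4)$ thanks to Euler's identity applied to the cubic $C$ at its zero $(0, a_2, a_3, a_4)$. The main point—and the only genuine subtlety—is that this subspace be exactly $3$-dimensional, i.e.\ that $(C_2, C_3, C_4) \neq 0$; this follows from the smoothness of the elliptic curve $E_p$. The quotient modulo $\langle p, v \rangle$ is then $1$-dimensional, producing a unique 2-plane $P$ and proving that $\ell$ is of the second type.
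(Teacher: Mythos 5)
Your argument is correct and complete; note that the paper itself states this proposition without proof, deferring to Murre, so there is no in-text argument to compare against. Your coordinate computation is in the same spirit as the paper's other proofs (the normal form $f=x_0^2x_1+C$ of Equation \eqref{one}, explicit tangent spaces along a parametrized line): the gradient of $f$ at $q=(\lambda,0,\mu a_2,\mu a_3,\mu a_4)$ is indeed $(0,\lambda^2+\mu^2C_1,\mu^2C_2,\mu^2C_3,\mu^2C_4)$, the two conditions $w_1=0$ and $C_2w_2+C_3w_3+C_4w_4=0$ cut out the intersection $\bigcap_{q\in\ell}T_qX$, Euler's relation puts $\ell$ inside it, and independence of the two linear forms gives a unique $2$-plane tangent to $X$ along $\ell$, which is exactly the paper's definition of a line of the second type. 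The one point you flag as the "genuine subtlety," that $(C_2,C_3,C_4)\neq(0,0,0)$, is legitimately dispatched by the smoothness of $E_p$, which the paper assumes throughout; if you want the argument fully self-contained, observe that if $C_2=C_3=C_4=0$ at $(0,a_2,a_3,a_4)$ then the point $(x_0,0,\mu a_2,\mu a_3,\mu a_4)$ with $x_0^2=-C_1$ would be a singular point of $X$, so smoothness of $X$ alone suffices. (Minor typo: the point of $E_p$ should be written $(0:0:a_2:a_3:a_4)$.)
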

We give an elementary proof in coordinates of the following theorem.

\begin{thm}\cite{tjurin1971geometry}\label{Tjurin}
Let $p\in X$ be an Eckardt point.
The triple lines on $X$ correspond exactly to the inflection points of the elliptic curve $E_{p}$ which is the base of the cone $X\cap T_{p}X\subset T_{p}X$.
\end{thm}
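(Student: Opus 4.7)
The plan is to translate the triple-line condition recalled in Section 2 into an intrinsic condition on the elliptic curve $E_p$, and then to recognize that condition as the classical definition of an inflection point of a smooth plane cubic. In the coordinates of Section 3.1 the equation of $X$ is $f=x_0^2x_1+C(x_1,\ldots,x_4)$ and $E_p$ is cut out in the plane $\{x_0=x_1=0\}$ by $C(0,x_2,x_3,x_4)=0$. Every line through $p$ is $\ell_q=\overline{pq}$ for a unique point $q=(0:0:q_2:q_3:q_4)\in E_p$, and by the previous proposition $\ell_q$ is of the second type, so there is a unique (up to scalar and modulo $\ell_q$) vector $v_2=(v_{2,0},0,w_2,w_3,w_4)$ with $w=(w_2,w_3,w_4)$ tangent to $E_p$ at $q$ such that the plane spanned by $p,q,v_2$ is tangent to $X$ along $\ell_q$.

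Next, I would plug $v_0=p$, $v_1=q$ and $v_2$ into the $t_2$-expansion of $f(t_0v_0+t_1v_1+t_2v_2)$ recalled in the preliminaries. The constant term in $t_2$ is $f(t_0p+t_1q)=t_1^3C(0,q)=0$, and the coefficient of $t_2^1$ vanishes by choice of $v_2$ (this is the fact that $\ell_q$ is of the second type). For the coefficient of $t_2^2$, all mixed second derivatives $\partial_0\partial_i f$ with $i\geq 2$ are identically zero, $\partial_0\partial_1 f$ meets the zero component $v_{2,1}=0$, and $\partial_0^2 f=2x_1$ vanishes on $\ell_q$; so only the block $(\partial^2 C/\partial x_i\partial x_j)_{2\leq i,j\leq 4}$ survives, and using its linearity in $(x_1,\ldots,x_4)$ this coefficient simplifies to
$$\tfrac{1}{2}t_1\sum_{2\leq i,j\leq 4}w_iw_j\frac{\partial^2 C}{\partial x_i\partial x_j}(0,q).$$

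By the preliminaries, $\ell_q$ is a triple line if and only if the plane cubic $f(t_0p+t_1q+t_2v_2)$ is a multiple of $t_2^3$, that is (given that $\ell_q$ is already of the second type), if and only if the displayed coefficient vanishes identically in $t_0,t_1$, equivalently
$$\sum_{2\leq i,j\leq 4}w_iw_j\frac{\partial^2 C}{\partial x_i\partial x_j}(0,q)=0.$$
To recognize this as the inflection condition, I would parametrize the tangent line to $E_p$ at $q$ inside the plane $\{x_0=x_1=0\}$ as $q+tw$ and Taylor-expand $C(0,q+tw)$: the constant and linear terms vanish (since $q\in E_p$ and $w$ is tangent), so the tangent meets $E_p$ with multiplicity $\geq 3$ at $q$ exactly when the $t^2$-coefficient $\sum w_iw_j\partial_i\partial_j C(0,q)$ vanishes, which is the classical defining condition for $q$ to be an inflection point of the smooth plane cubic $E_p$.

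The main obstacle is the careful bookkeeping of the $t_2^2$ coefficient and checking invariance of the answer under the indeterminacy in $v_2$: the component $v_{2,0}$ along $p$ must drop out (it does, because every second derivative of $f$ in the direction of $x_0$ either vanishes on $\ell_q$ or is paired with $v_{2,1}=0$), and the quadratic form $\sum w_iw_j\partial_i\partial_j C(0,q)$ must be well-defined on the one-dimensional tangent line $T_qE_p$, i.e.\ invariant under $w\mapsto w+\lambda q$; this follows from Euler's identity applied to the cubic $C(0,\cdot)$ and to its first partials, which give $\sum q_iq_j\partial_i\partial_j C(0,q)=6C(0,q)=0$ and $\sum q_j\partial_i\partial_j C(0,q)=2\partial_i C(0,q)$, the latter paired against the tangent vector $w$.
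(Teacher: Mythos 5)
Your argument is correct and follows essentially the same route as the paper's proof: expand $f(t_0p+t_1q+t_2v_2)$ in $t_2$, observe that the $t_2^0$ and $t_2^1$ terms vanish because $\ell_q$ is of the second type, identify the $t_2^2$-coefficient with the quadratic term of $C(0,\cdot)$ restricted to the tangent line of $E_p$ at $q$, and match the triple-line condition with the classical inflection condition. Your additional checks (invariance of the quadratic form under $w\mapsto w+\lambda q$ and the vanishing of the cross-terms involving $x_0$) are careful bookkeeping that the paper leaves implicit, so the two proofs coincide in substance.
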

\begin{proof}
Let $a\in E_{p}$ be a point and $\ell_{a}$ the tangent line of $E_{p}$ at $a$. Then $\ell_{a}$ is defined by $\displaystyle\sum_{i=2}^{4}
x_{i}\dfrac{\partial C}{\partial  x_{i}}(a)=0.$
The 2-plane $P_{1}$ spanned by $p$ and $\ell_{a}$ is tangent to $X$ along all of $\ell$. Let $b\in \ell_{a}$ be a point such that $b\neq a$ and $P_{2}$ the 2-plane in which lies $E_{p}$. Since $\ell_{a}\subset P_{2}$ then $b\in P_{2}$ and one can write 

\begin{equation}\label{2.10}
\displaystyle\sum_{i=2}^{4} b_{i}\dfrac{\partial C}{\partial x_{i}}(a) =0.
\end{equation}
We have thus $P_{1}={\rm{span}}(a,b,p)$ and the plane cubic $P_{1}\cap X$ is defined by $f(t_{0}p + t_{1}a + t_{2}b)=0$ with $(t_{0}:t_{1}:t_{2})$ the projective
coordinates of $P_{1}$. Expanding in $t_{2}$ and using Equations \eqref{one} and \eqref{2.10}, one can see that the line $\ell$ of equation $t_{2}=0$ is a double line on $X$. This second type line is a triple line if and only if 

\begin{equation}\label{2.50}
t_{1}\sum_{i=2}^{4}\dfrac{\partial^{2} C}{\partial x_{i}^{2}}\left(a\right) b_{i}^{2}+2t_{1}\displaystyle\sum_{2\leq i<j\leq 4}\dfrac{\partial^{2}C}{\partial x_{j}\partial x_{i}}\left(a\right) b_{i}b_{j}=0\quad\mbox{and}\quad  \dfrac{\partial^{3}C}{\partial t_{2}^{3}}(t_{1}a)\neq 0
\end{equation}
holds. Now we are going to study inflection points on the elliptic curve $E_{p}$. The point $a\in E_{p}$ is an inflection point if it is a point of multiplicity three for the intersection $E_{p}\cap \ell_{a}$ defined by $C(t_{1}a+t_{2}b)=0$. Since $C(t_{1}a)$ and $\dfrac{\partial C}{\partial t_{2}}(t_{1}a)$ vanish then $a\in E_{p}$ is an inflection point if and only if \eqref{2.50} holds, which are necessary and sufficient conditions for the line $\ell$ of equation $t_{2}=0$ to be a triple line on $X$.
\end{proof}
The planes $P_{1}$ and $P_{2}$ meet along $\ell_{a}$, the tangent line to $E_{p}$ at $a$. The point $a$ gives  thus rise to the line of the second type $\ell\subset X$, and conversely the line of the second type gives rise to the point $a$. When $a\in E_{p}$ is not an inflection point, the tangent line $\ell_{a}$ cuts out the elliptic curve $E_{p}$ in a third point $a^{'}\in E_{p}$ which gives rise to the residual line $\ell^{'}$ of the double line $\ell$.

\begin{cor}\label{converse}
If a smooth complex cubic threefold contains Eckardt points then it contains triple lines.
\end{cor}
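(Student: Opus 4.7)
The plan is to combine the identification from Theorem~\ref{Tjurin} with the classical fact that every smooth plane cubic has inflection points. More precisely, let $p \in X$ be an Eckardt point. By the analysis following Definition~\ref{ECKARDT}, the intersection $X\cap T_{p}X$ is a cone with vertex $p$ over the plane cubic curve $E_{p}$ of equation $C(0,x_{2},x_{3},x_{4})=0$ inside the 2-plane $\{x_{0}=0,\,x_{1}=0\}$, and $E_{p}$ is an elliptic curve (in particular it is smooth and irreducible, as already used when running out the argument that a linear or quadratic common factor would force $X$ to contain a plane).

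Having secured smoothness, I would invoke the standard fact that a smooth cubic curve in $\mathbb{P}^{2}$ has exactly nine inflection points, obtained as the intersection of the curve with its Hessian and counted via Bézout's theorem (the Hessian of a smooth cubic is not identically zero in characteristic zero, so the intersection is a proper one of degree $3\cdot 3=9$). In particular $E_{p}$ possesses at least one inflection point $a$.

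Now I would apply Theorem~\ref{Tjurin}: to such an inflection point $a\in E_{p}$ corresponds a line $\ell\subset X$ of the second type contained in the 2-plane spanned by $p$ and the tangent line $\ell_{a}$ to $E_{p}$ at $a$, and this line is exactly a triple line on $X$. Hence $X$ contains at least one (and in fact at least nine, one per inflection point of $E_{p}$) triple line, which is the desired conclusion.

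The only potentially delicate point is making sure that Theorem~\ref{Tjurin} really gives a bijective correspondence between inflection points of $E_{p}$ and triple lines through $p$, so that the existence of an inflection point genuinely produces a triple line; but this is precisely the content of the theorem as stated (the condition \eqref{2.50} characterizes both being an inflection point and being a triple line), so no additional work is required beyond quoting it. There is no serious obstacle here; the corollary is essentially a one-line consequence of Theorem~\ref{Tjurin} together with the existence of flex points on smooth plane cubics.
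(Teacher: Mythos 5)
Your proposal is correct and follows exactly the route the paper intends: the corollary is stated as an immediate consequence of Theorem~\ref{Tjurin} together with the classical fact that a smooth plane cubic has nine inflection points, which is also how the paper derives the subsequent corollary that exactly nine triple lines pass through an Eckardt point. No further comment is needed.
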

\begin{example}\text{}
\begin{enumerate}
\item[1.] The Fermat cubic defined by $ x_{0}^{3} + x_{1}^{3} + x_{2}^{3} + x_{3}^{3} + x_{4}^{3}=0$ has 30 Eckardt points with coordinates $(0,\ldots,\underbrace{1}_{x_{i}},\ldots,\underbrace{\xi}_{x_{j}},\ldots,0)$ with $x_{k}=0$ for $k\neq i,j$ and $\xi\in\mathbb{C}$ such that $\xi^{3}=-1$, and  contains 135 triple lines.
\item[2.] The Klein cubic defined by $x_{0}^{2}x_{1} + x_{1}^{2}x_{2} + x_{2}^{2}x_{3} + x_{3}^{2}x_{4} + x_{4}^{2}x_{0}=0$ contains neither Eckardt points nor triple lines.
\item[3.] The cubic threefold defined by $ x_{0}^{2}x_{2} + x_{2}^{2}x_{4} + x_{1}^{2}x_{3} + x_{3}^{2}x_{0} + x_{4}^{3}=0$ has one Eckardt point with coordinates $(0:1:0:0:0)$ and contains 9 triple lines.
\end{enumerate}
\end{example}
\begin{remark}
Nevertheless, the converse of Corollary \ref{converse} is not true. There exist smooth complex cubic threefolds with no Eckardt points but containing triple lines (see Section \ref{confi}). 
\end{remark}
\begin{cor}
There are exactly nine triple lines going through an Eckardt point on a smooth cubic threefold. 
\end{cor}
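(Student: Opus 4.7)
By \Cref{Tjurin}, the triple lines on $X$ through the Eckardt point $p$ are in bijection with the inflection points of the elliptic curve $E_p\subset T_pX$. So the plan is to reduce the statement to the classical count of flexes on a smooth plane cubic.

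First I would recall the setup from \Cref{Tjurin}: in coordinates where $p=(1:0:0:0:0)$ and $T_pX=\{x_1=0\}$, the equation of $X$ takes the form $f=x_0^2x_1+C(x_1,\ldots,x_4)$, and $E_p$ is the smooth plane cubic cut out in the plane $\{x_0=0,\,x_1=0\}\cong\mathbb{P}^2$ by $C(0,x_2,x_3,x_4)=0$. The correspondence of \Cref{Tjurin} is a bijection (the planes $P_1$ and $P_2$ meet along $\ell_a$, so the inflection point $a$ determines the triple line $\ell$ uniquely, and conversely), so it suffices to count the number of inflection points of $E_p$.

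The second step is the Hessian argument: an inflection point of the smooth plane cubic $E_p\subset\mathbb{P}^2$ defined by $C(0,x_2,x_3,x_4)=0$ is exactly a point of $E_p$ at which the Hessian determinant $H_C:=\det\!\bigl(\partial^2 C/\partial x_i\partial x_j\bigr)_{2\leq i,j\leq 4}$ vanishes. Indeed, the vanishing conditions \eqref{2.50} for $a$ to be an inflection point express precisely that the restriction of the Hessian quadratic form to the tangent line $\ell_a$ degenerates, which by a standard linear-algebra computation is equivalent to $H_C(a)=0$. Thus the inflection points are the intersection points $E_p\cap\{H_C=0\}$.

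Finally I would invoke Bézout: since $C$ is a cubic, $H_C$ is again a cubic (the Hessian of a degree-$d$ form in $3$ variables has degree $3(d-2)=3$), and since $E_p$ is smooth the Hessian curve does not share a component with $E_p$. Hence the intersection consists of $3\cdot 3=9$ points counted with multiplicity, and these are all simple by the classical fact that on a smooth plane cubic the flexes are all ordinary (the intersection multiplicity of $E_p$ with $\{H_C=0\}$ at a flex is $1$). This yields exactly $9$ inflection points on $E_p$, hence exactly $9$ triple lines on $X$ through $p$. The only mildly technical point is the last one — that the $9$ intersection points are all distinct — but this is standard and follows from the smoothness of $E_p$.
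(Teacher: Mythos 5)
Your proof is correct and follows exactly the route the paper intends: the corollary is stated as an immediate consequence of \Cref{Tjurin}, reducing the count of triple lines through the Eckardt point $p$ to the classical fact that the smooth plane cubic $E_{p}$ has exactly nine inflection points. Your Hessian--B\'ezout argument just makes that classical count explicit, so there is nothing to object to.
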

Every smooth cubic threefold containing Eckardt points contains therefore at least nine triple lines.

\subsection{Eckardt points and polar quadrics}

Let $X\subset\mathbb{P}^{4}$ be a  smooth complex cubic threefold and $p\in X$ a point. We recall the following definition (see \cite[Definition 2.11]{cools2010star}). 
\begin{definition}
The polar quadric of a point $p=(p_{0}:\ldots:p_{4})\in\mathbb{P}^{4}$ with respect to $X$ is the hypersurface defined by $\displaystyle\sum_{i=0}^{4}p_{i}\dfrac{\partial f}{\partial x_{i}}=0$. 
\end{definition}
Denote by $\bigtriangleup_{p}(X)$ the polar quadric of $p$ with respect to $X$. From \cite[p.161-162]{canonero1997inflection} we have the following proposition.

\begin{prop}\label{Propo}
A point $p\in X$ is an Eckardt point if and only if the polar quadric $\bigtriangleup_{p}(X)$ splits up as the tangent space $T_{p} X$ and a hyperplane not passing through $p$.
\end{prop}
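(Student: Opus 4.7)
My plan is to work in the same coordinate system the paper introduced just after Definition~\ref{ECKARDT}: take $p = (1:0:0:0:0)$ with $T_{p}X = \{x_{1}=0\}$, so that the equation of $X$ has the shape
\begin{equation*}
f(x_{0},\ldots,x_{4}) = x_{0}^{2}x_{1} + x_{0}Q(x_{1},\ldots,x_{4}) + C(x_{1},\ldots,x_{4}).
\end{equation*}
The first step is to compute $\bigtriangleup_{p}(X)$ explicitly. Since all coordinates of $p$ vanish except $p_{0}=1$, the polar quadric reduces to
\begin{equation*}
\bigtriangleup_{p}(X) = \tfrac{\partial f}{\partial x_{0}} = 2x_{0}x_{1} + Q(x_{1},\ldots,x_{4}),
\end{equation*}
a quadric whose equation already highlights $x_{1}$ as the only candidate linear factor coming from $T_{p}X$.

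The main step is then purely algebraic: the quadric $2x_{0}x_{1} + Q$ is divisible by $x_{1}$ if and only if $x_{1}$ divides $Q$, i.e.\ $Q(0,x_{2},x_{3},x_{4}) = 0$. In that case, writing $Q = x_{1}M$ for a linear form $M(x_{1},\ldots,x_{4})$, one has $\bigtriangleup_{p}(X) = x_{1}(2x_{0}+M)$, which exhibits the splitting as $T_{p}X$ and the hyperplane $H = \{2x_{0}+M = 0\}$. Evaluating the second factor at $p=(1:0:0:0:0)$ gives $2 \neq 0$, so $H$ automatically avoids $p$. This handles both the factorization and the position of $H$ in both directions at once.

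To close the equivalence, I would identify the algebraic condition $Q(0,x_{2},x_{3},x_{4})=0$ with being an Eckardt point; this is exactly what the discussion right after Definition~\ref{ECKARDT} establishes, since restricting $f$ to $T_{p}X$ produces the cubic $x_{0}Q(0,x_{2},x_{3},x_{4}) + C(0,x_{2},x_{3},x_{4})$ in $T_{p}X = \mathbb{P}^{3}$, whose multiplicity at $p=(1:0:0:0)$ is three precisely when $Q(0,x_{2},x_{3},x_{4})$ vanishes identically.

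There is no genuine obstacle here: the whole argument reduces to checking when $2x_{0}x_{1}+Q$ has $x_{1}$ as a linear factor. The only conceptual point is the normalization of coordinates placing $p$ and $T_{p}X$ in standard form, after which the polar quadric takes a transparent expression and linear algebra concludes.
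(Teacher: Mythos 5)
Your proof is correct and follows essentially the same route as the paper: place $p$ and $T_{p}X$ in standard position, compute $\bigtriangleup_{p}(X)=\{2x_{0}x_{1}+Q=0\}$, and identify the splitting condition with the vanishing of $Q$ on $T_{p}X$. If anything, your write-up is slightly tidier, since you keep the general form $f=x_{0}^{2}x_{1}+x_{0}Q+C$ throughout and exhibit the factorization $x_{1}(2x_{0}+M)$ explicitly, whereas the paper first normalizes to $Q\equiv 0$ in the forward direction and fixes the residual hyperplane to be $\{x_{0}=0\}$ in the converse.
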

\begin{proof}
Let $p=(1:0:0:0:0)\in X$ be an Eckardt point and $T_{p}X=\left\lbrace x_{1}=0\right\rbrace$ be the projective tangent space of $X$ at $p$. Then the equation of $X$ can take the form 

\begin{equation*}
f(x_{0},\ldots,x_{4})=x_{0}^{2}x_{1} + C(x_{1},\ldots,x_{4})
\end{equation*}
where $C(x_{1},\ldots,x_{4})$ is a homogeneous polynomial of degree three. The polar quadric $\bigtriangleup_{p}(X)$ is defined by $x_{0}x_{1}=0$. It therefore contains the tangent space $T_{p}X$ and a hyperplane not passing through $p$. Conversely, choose coordinates on $\mathbb{P}^{4}$ such that $p=(1:0:0:0:0)$ is a point of $X$, $T_{p}X=\left\lbrace x_{1}=0\right\rbrace$ and $H_{p}X=\left\lbrace x_{0}=0\right\rbrace$ is a hyperplane not passing through $p$. The equation of $X$ can be written:

\begin{equation*}
f(x_{0},\ldots,x_{4})=x_{0}^{2}x_{1}+x_{0}Q(x_{1},\ldots,x_{4})+C(x_{1},\ldots,x_{4})
\end{equation*}
where $Q(x_{1},\ldots,x_{4})$ and $C(x_{1},\ldots,x_{4})$ are homogeneous polynomials of degree two and three respectively. The polar quadric $\bigtriangleup_{p}(X)$ is given by the equation:

\begin{equation*}\label{0307}
2x_{0}x_{1}+Q(x_{1},\ldots,x_{4})=0.
\end{equation*}
Since it splits up as $T_{p}X=\left\lbrace x_{1}=0\right\rbrace$ and $H_{p}X=\left\lbrace x_{0}=0\right\rbrace$ then $Q(x_{1},\ldots,x_{4})=0$ and $p$ is an Eckardt point.
\end{proof}
The two characterizations of Eckardt points studied in this paper are thus equivalent.
The following lemma shows how polar quadrics can be used to find all Eckardt points on a cubic threefold.

\begin{lem}\label{LELEMME}
A point $p\in X$ is an Eckardt point if and only if the polar quadric $\bigtriangleup_{p}(X)$ is of rank at most two.
\end{lem}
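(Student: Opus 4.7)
The plan is to reduce the statement to Proposition~\ref{Propo}, which characterises Eckardt points by the condition that the polar quadric splits as $T_pX$ together with a hyperplane not passing through $p$. The forward implication is then immediate: such a split quadric is a product of two linear forms, hence has rank at most two. The real content is the converse, namely that if $\bigtriangleup_p(X)$ has rank at most two, then it must decompose in precisely this geometrically constrained way.

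For the converse I would collect three standard facts about the polar quadric at a point $p\in X$, each a one-line consequence of Euler's identity $\sum_i x_i\,\partial_i f = 3f$ and its first derivative $\sum_i x_i\,\partial_i\partial_j f = 2\,\partial_j f$: (i) the point $p$ lies on $\bigtriangleup_p(X)$, since $\sum_i p_i\,\partial_i f(p)=3f(p)=0$; (ii) the first partials of the polar quadric at $p$ equal $2\,\partial_j f(p)$, which do not all vanish because $X$ is smooth at $p$, hence $p$ is a smooth point of $\bigtriangleup_p(X)$; (iii) consequently the tangent hyperplane to $\bigtriangleup_p(X)$ at $p$ has equation $\sum_j \partial_j f(p)\,x_j=0$, i.e.\ it is exactly $T_pX$.

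With these in hand the converse is almost formal. A quadric of rank at most two is a union of two (possibly equal) hyperplanes. Rank one is excluded by (ii), since a double hyperplane is everywhere singular. So $\bigtriangleup_p(X)=H_1\cup H_2$ with $H_1\ne H_2$, and by (ii) the point $p$ lies on exactly one of them, say $H_1$. The tangent to a reducible quadric $H_1\cup H_2$ at a smooth point of $H_1\setminus H_2$ is $H_1$, so (iii) forces $H_1=T_pX$, while $H_2$ avoids $p$. Proposition~\ref{Propo} then applies and $p$ is an Eckardt point.

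I do not expect any real obstacle: the three auxiliary facts are bookkeeping with Euler's identity, and the decomposition of a low-rank quadric is elementary projective geometry. If one preferred a purely computational alternative, the coordinate setup already fixed in the paper ($p=(1:0:0:0:0)$, $T_pX=\{x_1=0\}$, $f=x_0^2x_1+x_0Q+C$) gives the polar quadric $2x_0x_1+Q(x_1,\ldots,x_4)$; a triangular change of variable $y_0=2x_0+(\text{linear terms in }x_1,\ldots,x_4)$ separates out the $x_1$-divisible part of $Q$ and shows that the rank equals $2+\operatorname{rank}(Q|_{x_1=0})$, collapsing the statement to the condition $Q|_{x_1=0}\equiv 0$ already used in the earlier proofs.
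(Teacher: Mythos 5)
Your proof is correct, and although it follows the same top-level strategy as the paper --- reducing everything to Proposition~\ref{Propo} --- your handling of the converse is genuinely more complete than what the paper prints. The paper argues by contraposition: if $p$ is not an Eckardt point then, citing Proposition~\ref{Propo}, the polar quadric ``is not the product of two linear forms'' and hence has rank at least three. But Proposition~\ref{Propo} only excludes one specific factorization, namely $T_{p}X$ times a hyperplane missing $p$; it does not by itself exclude a splitting into two hyperplanes both passing through $p$, or into two hyperplanes neither of which is $T_{p}X$, or a double hyperplane. The step you supply from Euler's identity --- that $p$ lies on $\bigtriangleup_{p}(X)$, is a smooth point of it because $X$ is smooth at $p$, and that the tangent hyperplane of $\bigtriangleup_{p}(X)$ at $p$ is exactly $T_{p}X$ --- is precisely what forces any rank-$\le 2$ polar quadric into the configuration required by Proposition~\ref{Propo}, and this is the step the paper leaves implicit. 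Your coordinate alternative is also sound: writing $Q = x_{1}L + Q(0,x_{2},x_{3},x_{4})$ and completing the hyperbolic pair gives rank equal to $2+\operatorname{rank}\bigl(Q(0,x_{2},x_{3},x_{4})\bigr)$, which collapses the lemma to the multiplicity-three condition $Q(0,x_{2},x_{3},x_{4})\equiv 0$ already used earlier in the paper. Either version of your argument closes the case analysis that the paper's converse glosses over.
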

\begin{proof}
Let $p\in X$ be an Eckardt point, $T_{p}X=\left\lbrace l_{1}(x_{0},\ldots,x_{4})=0\right\rbrace$ the projective tangent space of $X$ at $p$ and $H_{p}X=\left\lbrace l_{2}(x_{0}, \ldots,x_{4})=0\right\rbrace$ a hyperplane not passing through $p$, with $l_{1}(x_{0},\ldots,x_{4})$ and $l_{2}(x_{0},\ldots,x_{4})$ two linear forms. Assume the polar quadric $\bigtriangleup_{p}(X)$ is defined by the equation $q(x_{0},\ldots,x_{4})=0$, where $q(x_{0},\ldots,x_{4})$ is a homogeneous polynomial of degree two. Using Proposition \ref{Propo} we write
$$q(x_{0},\ldots,x_{4})=l_{1}(x_{0},\ldots,x_{4})l_{2}(x_{0},\ldots,x_{4})$$
and the quadratic form is of rank at most two. Conversely, suppose $p\in X$ is not an Eckardt point. Then the quadratic form $q(x_{0},\ldots,x_{4})$ is not the product of two linear forms (see Proposition \ref{Propo}) and $\bigtriangleup_{p}(X)$ is therefore of rank at least three.
\end{proof}

\section{Computing Eckardt points on a cubic threefold}
We have studied Eckardt points on a cubic threefold through two different approaches: the first one involving elliptic curves and the second one involving polar quadrics. Both approaches can be used to compute Eckardt points on a cubic threefold. However, it is generally challenging to compute Eckardt points on a cubic threefold using the first approach because the expression of the tangent space $T_{p}X$ can make the computation of points of multiplicity three of $X\cap T_{p}X\subset T_{p}X$ difficult. It is therefore easier to check whether a point of the cubic is an Eckardt point than to find all Eckardt points using the first approach. Nevertheless, this approach has the benefit of revealing the equations of the elliptic curves of the Fano surface.

Unlike the first approach, the second one can be used to compute all Eckardt points of a cubic threefold and check whether a point on the cubic is an Eckardt point. It therefore has the advantage of revealing the number of Eckardt points of a cubic threefold.

Using Lemma \ref{LELEMME}, we propose the following method for computing all Eckardt points on a cubic threefold.

\subsection{Method for computing all Eckardt points on a cubic threefold.}
Let $X\subset\mathbb{P}^{4}$ be a smooth complex cubic threefold, $p=(p_{0}:\ldots:p_{4})\in X$ a point and $\mathcal{B}$ the matrix associated with the polar quadric $\bigtriangleup_{p}(X)$. Eckardt points on $X$ are given by the vanishing locus of all $3\times3$ minors of $\mathcal{B}$. In order to count each point only once, we use a stratification of $\mathbb{P}^{4}$ described as follows: the first stratum is the affine chart $p_{0}=1$ and the $i$-th one is defined by $p_{0}=0,\ldots,p_{i-2}=0, p_{i-1}=1$ for $i=1,\ldots,4$.

\begin{example}\text{}
\item[1.] Consider the Fermat cubic $F_{4}=\lbrace x_{0}^{3}+x_{1}^{3}+x_{2}^{3}+x_{3}^{3}+x_{4}^{3}=0\rbrace\subset\mathbb{P}^{4}$. Denote by $\mathcal{B}_{1}$ the matrix associated with the polar quadric $\bigtriangleup_{p}F_{4}$.

\begin{enumerate}

\item On the affine chart $p_{0}=1$ the vanishing locus of all $3\times 3$ minors of $\mathcal{B}_{1}$ is defined by the following equations:

\begin{center}
$p_2^4 + p_2=0,~p_3^4 + p_3=0,~p_4^4 + p_4=0,~p_1^3 + p_2^3 + p_3^3 + p_4^3 + 1=0,~p_1p_2=0,\newline p_1p_3=0,~p_2p_3=0,~ p_1p_4=0,~p_2p_4=0,~p_3p_4=0$.
\end{center}
If $p_{i}\neq 0$ then $p_{j}=0$ for $i\neq j$ and $p_{i}^{3}=-1$. We get 12 Eckardt points with coordinates $(1:0:\ldots:p_{i}:\ldots:0)$ with $p_{i}^{3}=-1$.
\item In the stratum $p_{0}=0, p_{1}=1$ the vanishing locus of all $3\times 3$ minors of $\mathcal{B}_{1}$ is defined by the following equations:

\begin{center}
$p_3^4 + p_3=0,~p_{4}^4 + p_4=0,~p_2^3 + p_3^3 + p_4^3 + 1=0,~p_2p_3=0,\newline
p_2p_4=0,~p_3p_4=0$.
\end{center}
If $p_{i}\neq 0$ then $p_{j}=0$ for $i\neq j$ and $p_{i}^{3}=-1$. We get 9 Eckardt points with coordinates $(0:1:\ldots:p_{i}:\ldots:0)$ with $p_{i}^{3}=-1$.
\item In the stratum $p_{0}=0, p_{1}=0, p_{2}=1$ the vanishing locus of all $3\times 3$ minors of $\mathcal{B}_{1}$ is defined by the following equations:

\begin{center}
$p_4^4 + p_4=0,~p_3^3 + p_4^3 + 1=0,~p_3p_4=0$.
\end{center}
We get three Eckardt points with coordinates $(0:0:1:\xi:0)$ and three others  with coordinates $(0:0:1:0:\xi)$, with $\xi^{3}=-1$.
\item In the last stratum we get $3$ Eckardt points with coordinates $(0:0:0:1:\xi)$ with $\xi^{3}=-1$.
\end{enumerate}
\item[2.] The Klein cubic contains no Eckardt point because the vanishing locus of all $3\times 3$ minors of the matrix associated with its polar quadric is empty in all strata.
\end{example}
Without giving much details, the authors in \cite[Example~ 4.4]{canonero1997inflection} state  that the cubic threefold $	X_{2}\subset\mathbb{P}^{4}$ defined by 
$$x_{0}^{2}x_{4} + x_{1}^{2}x_{3} + x_{3}^{3} + x_{3}^{2}x_{4} + x_{3}x_{4}^{2} - x_{4}^{3} + x_{2}^{3} = 0$$
has exactly two Eckardt points with coordinates $(1:0:0:0:0)$ and $(0:1:0:0:0)$. It is easy to check that these points are Eckardt points of $X_{2}$ using both approaches. However, showing that $X_{2}$ has no Eckardt points besides $(1:0:0:0:0)$ and $(0:1:0:0:0)$ is quite challenging. Therefore the importance of the method we proposed in this paper for computing all Eckardt points of a cubic threefold. Let $p=(p_{0}:p_{1}:p_{2}:p_{3}:p_{4})\in X_{2}$ be a point and denote by $\mathcal{B}_{2}$ the matrix associated with the polar quadric $\bigtriangleup_{p}X_{2}$. On the affine chart $p_{0}=1$ the vanishing locus of all $3\times 3$ minors of $\mathcal{B}_{2}$ is defined by $p_{1}=0,  p_{2}=0, p_{3}=0, p_{4}=0$, and in the stratum $p_{0}=0, p_{1}=1$ it is defined by $p_{2}=0, p_{3}=0, p_{4}=0$ while it is empty in the other strata. This proves that $X_{2}$ has no Eckardt points besides $(1:0:0:0:0)$ and $(0:1:0:0:0)$.

\section{Main component, elliptic curves and triple lines configuration of some cubic threefolds}\label{confi}

\subsection{Strategy for finding some cubic threefolds with no Eckardt points but containing triple lines}\label{subsection}
Let $\ell$ be a line of the second type on $X$ given by $$x_{2}=0, x_{3}=0, x_{4}=0.$$ Following \cite[(6.10)]{clemens1972intermediate} the equation of $X$ may take the form:

\begin{equation*}
f(x_{0},\ldots, x_{4}) = x_{0}^{2}x_{2} + x_{1}^{2}x_{3} + x_{0}q_{0}(x_{2},x_{3}, x_{4}) + x_{1}q_{1}(x_{2},x_{3}, x_{4}) + P(x_{2},x_{3}, x_{4})=0
\end{equation*}
where $q_{0}(x_{2},x_{3}, x_{4})=\sum_{2\leq j\leq k\leq 4}b_{0jk}$ and $q_{1}(x_{2},x_{3}, x_{4})=\sum_{2\leq j\leq k\leq 4}b_{1jk}$ are homogeneous polynomials of degree two and $P(x_{2},x_{3}, x_{4})$ is a homogeneous polynomial of degree three. Assume $\ell$ is a triple line so that the plane given by $x_{2}=0, x_{3}=0$ is the plane tangent to $X$ in all points of $\ell$. Then the equation of $X$ may be written

\begin{equation}\label{(5.1)}
f(x_{0},\ldots, x_{4}) = x_{0}^{2}x_{2} + x_{1}^{2}x_{3} + x_{0}q_{0}(x_{2},x_{3}, x_{4}) + x_{1}q_{1}(x_{2},x_{3}, x_{4}) + kx_{4}^{3}=0
\end{equation}
with $k\neq 0$ and $b_{044}=0, b_{144}=0$. Using Equation $\eqref{(5.1)}$ we obtain many examples of smooth cubic threefolds with no Eckardt points but containing triple lines. 

\subsection{Main component, elliptic curves and triple lines configuration}

The following table gives the list of smooth complex cubic threefolds $X_{i}\subset\mathbb{P}^{4}$ we will work with in this section. Cubics $X_{5}, X_{6}, X_{7}$ and $X_{8}$ are obtained through the method detailed in Section \ref{subsection}.

\begin{table}[h!]
\begin{tabular}{|l|l|}
\hline 
$X_{1}$ & $x_{0}^{2}x_{2} + x_{2}^{2}x_{4} + x_{1}^{2}x_{3} + x_{3}^{2}x_{0} + x_{4}^{3}=0$\\ 
\hline 
$X_{2}$ &  $2x_{0}x_{2}^{2} + 2x_{2}x_{1}^{2} + x_{1}^{2}x_{3} + x_{3}x_{0}^{2} + 3x_{3}^{3} + x_{4}^{3}=0$\\
\hline
$X_{3}$ & $x_{0}^{2}x_{4} + x_{1}^{2}x_{3} + x_{3}^{3} + x_{3}^{2}x_{4} + x_{3}x_{4}^{2} - x_{4}^{3} + x_{2}^{3} = 0$ \\ 
\hline
$ X_{4}$ & $x_{0}^{3} + x_{1}^{3} + x_{2}^{3} + x_{3}^{3} + x_{4}^{3} + 3x_{0}x_{1}x_{2}=0$\\
\hline 
$X_{5}$ & $x_{0}^{2}x_{2} + x_{1}^{2}x_{3} + x_{1}x_{2}^{2} + x_{0}x_{3}^{2} + x_{1}x_{3}^{2} + x_{4}^{3}=0$ \\  
\hline 
$X_{6}$ & $x_{0}^{2}x_{2} + x_{1}^{2}x_{3} + x_{0}x_{2}^{2} + x_{1}x_{2}^{2} + x_{0}x_{3}^{2} + 2x_{1}x_{2}x_{4} + 2x_{0}x_{3}x_{4} + x_{4}^{3}=0$ \\ 
\hline 
$X_{7}$ & $x_{0}^{2}x_{2} + x_{1}^{2}x_{3} + x_{1}x_{2}^{2} + x_{0}x_{3}^{2} + 2x_{0}x_{3}x_{4} + x_{4}^{3}=0$ \\ 
\hline 
$X_{8}$ & $x_{0}^{2}x_{2} + x_{1}^{2}x_{3} + x_{1}x_{2}^{2} +  x_{1}x_{3}^{2} + x_{0}x_{3}^{2} + x_{0}x_{4}^{2} + x_{1}x_{4}^{2} + x_{4}^{3}=0$\\
\hline 
\end{tabular} 
\caption{Some smooth cubic threefolds}
\end{table}
Table \ref{table2} gives the information about the number $n_{E}$ of Eckardt points and the number $n_{T}$ of triple lines of these cubics. 

\begin{table}[h!]
\begin{tabular}{|c|c|c|c|c|c|c|c|c|}
\hline 
cubic threefold & $X_{1}$ & $X_{2}$ & $X_{3}$ &$X_{4}$ &  $X_{5}$ & $X_{6}$ & $X_{7}$ & $X_{8}$ \\ 
\hline 
$n_{E}$ & 1 & 1 & 2 & 12 & 0 & 0 & 0 & 0\\ 
\hline 
$n_{T}$ & 9 & 33 & 39 & 81 & 27 & 9 & 2 & 1\\ 
\hline 
\end{tabular} 
\caption{Eckardt points and triple lines numbers}
\label{table2}
\end{table}
The following table gives the configuration of the main component, elliptic curves and triple lines for the above cubics. This table contains the number of triple lines, the number $n_{E_{p}}$ of elliptic curves, and the intersection number of the main component $P$ and elliptic curves in the affine chart $p_{0, 1}=1$. 

\begin{table}[h!]
\begin{tabular}{|c|c|c|c|c|c|}
\hline 
cubic threefold & $X_{1}$ & $X_{2}$ & $X_{3}$ &$X_{4}$ \\ 
\hline 
$n_{T}$ & 9 & 33  & 33 & 54  \\ 
\hline 
$n_{E_{p}}$ & 1 & 1 & 2 & 6  \\ 
\hline 
Intersection points & $Ep\cdot P =9$ & $Ep\cdot P =9$ & $Ep_{i}\cdot P =8$ & $Ep_{i}\cdot P =12$ (for  \\
number & & & $E_{p_{1}}\cdot E_{p_{2}} = 1$ &  3 elliptic curves) \\
& & & & $Ep_{j}\cdot P =6$ (for\\
& & & & the other elliptic curves)\\
& & & & $E_{p_{i}}\cdot E_{p_{j}} = 0$\\
\hline
\end{tabular} 
\caption{Main component, elliptic curves and triple lines configuration}
\label{table1}
\end{table}
The intersection points are computed over the rational field $\mathbb{Q}$. All the computations have been done using the software MAGMA \cite{MR1484478}, except the number of triple lines computed with SAGEMATHS \cite{sagemath}. The main component of $X_{1}, X_{2}, X_{3}$ and $X_{4}$ is irreducible over $\mathbb{Q}$. However, whether it is smooth is still an open question. Inspection of Table \ref{table1} reveals that the 9 intersection points of the elliptic curve and the main component of $X_{1}$ are exactly the triple lines of $X_{1}$.\\ 

\bibliographystyle{amsalpha}
\bibliography{biblio_Eckardt}

\providecommand{\bysame}{\leavevmode\hbox to3em{\hrulefill}\thinspace}
\providecommand{\MR}{\relax\ifhmode\unskip\space\fi MR }
\providecommand{\MRhref}[2]{%
  \href{http://www.ams.org/mathscinet-getitem?mr=#1}{#2}
}
\providecommand{\href}[2]{#2}
\begin{thebibliography}{{The}20}

\bibitem[BB23]{bockondas2023triple}
G.G. Bockondas and S.~Boissi{\`e}re, \emph{Triple lines on a cubic threefold},
  Comptes Rendus. Math{\'e}matique \textbf{361} (2023), no.~G4, 747--755.

\bibitem[BCP97]{MR1484478}
W.~Bosma, J.~Cannon, and C.~Playoust, \emph{The {M}agma algebra system. {I}.
  {T}he user language}, J. Symbolic Comput. \textbf{24} (1997), no.~3-4,
  235--265, Computational algebra and number theory (London, 1993).
  \MR{MR1484478}

\bibitem[CC10]{cools2010star}
F.~Cools and M.~Coppens, \emph{Star points on smooth hypersurfaces}, Journal of
  Algebra \textbf{323} (2010), no.~1, 261--286.

\bibitem[CCS97]{canonero1997inflection}
G.~Canonero, M.V. Catalisano, and M.E. Serpico, \emph{Inflection points of
  cubic hypersurfaces}, Bollettino della Unione Matematica Italiana-B (1997),
  no.~1, 161--186.

\bibitem[CG72]{clemens1972intermediate}
H.C. Clemens and P.A. Griffiths, \emph{The intermediate {J}acobian of the cubic
  threefold}, Annals of Mathematics (1972), 281--356.

\bibitem[Eck76]{eckardt1876ueber}
F.E. Eckardt, \emph{Ueber diejenigen {F}l{\"a}chen dritten grades, auf denen
  sich drei gerade {L}inien in einem {P}unkte schneiden}, Mathematische Annalen
  \textbf{10} (1876), no.~2, 227--272.

\bibitem[Gam18]{gammelgaard2018cubic}
S.~Gammelgaard, \emph{Cubic hypersurfaces, their {F}ano schemes, and special
  subvarieties}, Master's thesis, University of Oslo, 2018.

\bibitem[Huy20]{huybrechts2020geometry}
D.~Huybrechts, \emph{The geometry of cubic hypersurfaces}, Preprint (2020).

\bibitem[LPZ18]{laza2018moduli}
R.~Laza, G.~Pearlstein, and Z.~Zhang, \emph{On the moduli space of pairs
  consisting of a cubic threefold and a hyperplane}, Advances in Mathematics
  \textbf{340} (2018), 684--722.

\bibitem[Mur72]{murre1972algebraic}
J.P. Murre, \emph{Algebraic equivalence modulo rational equivalence on a cubic
  threefold}, Compositio Mathematica \textbf{25} (1972), no.~2, 161--206.

\bibitem[Rou09]{roulleau2009elliptic}
X.~Roulleau, \emph{Elliptic curve configurations on {F}ano surfaces},
  manuscripta mathematica \textbf{129} (2009), no.~3, 381--399.

\bibitem[Seg43]{segre1943non}
B.~Segre, \emph{The non-singular cubic surfaces}, Bull. Amer. Math. Soc
  \textbf{49} (1943), no.~5, 350--352.

\bibitem[{The}20]{sagemath}
{The Sage Developers}, \emph{{S}agemath, the {S}age {M}athematics {S}oftware
  {S}ystem ({V}ersion 3.7.7)}, 2020, {\tt https://www.sagemath.org}.

\bibitem[Tju71]{tjurin1971geometry}
A.N. Tjurin, \emph{The geometry of the {F}ano surface of a nonsingular cubic
  and {T}orelli theorems for {F}ano surfaces and cubics}, Mathematics of the
  USSR-Izvestiya \textbf{5} (1971), no.~3, 517.

\bibitem[Zha22]{zhang2022topological}
Y.~Zhang, \emph{Topological {A}bel-{J}acobi map for hypersurfaces in complex
  projective four-space}, PhD thesis, The Ohio State University, 2022.

\bibitem[Zha23]{zhang2023extension}
\bysame, \emph{Extension of the topological {A}bel-{J}acobi map for cubic
  threefolds}, arXiv e-prints (2023), arXiv--2308.

\end{thebibliography}

\end{document}